\newcommand{\R}{\mathbb{R}}
\newcommand{\diam}{\operatorname{diam}}
\numberwithin{equation}{section}
\newcommand{\Car}[0]{\operatorname{Car}}
\theoremstyle{plain}
\newtheorem{thm}[equation]{Theorem}
\theoremstyle{definition}
\newtheorem{defn}[equation]{Definition}
\theoremstyle{remark}
\newtheorem{rem}[equation]{Remark}
\author{Henri Martikainen}
\address[H.M.]{Department of Mathematics and Statistics, University of Helsinki, P.O.B. 68, FI-00014 Helsinki, Finland}
\email{henri.martikainen@helsinki.fi}
\thanks{Research of H.M. is supported by the Academy of Finland through the grant
Multiparameter dyadic harmonic analysis and probabilistic methods. }
\author{Mihalis Mourgoglou}
\address[M.M.]{Departament de Matem\`atiques, Universitat Aut\`onoma de Barcelona and Centre de Reserca Matem\` atica, Edifici C Facultat de Ci\`encies, 08193 Bellaterra (Barcelona)}
\email{mmourgoglou@crm.cat}
\thanks{Research of M.M. is supported by the ERC grant 320501 of the European Research Council (FP7/2007-2013).}
\subjclass[2010]{42B20}
\keywords{Square function, $\alpha$-numbers, Uniform rectifiability}
\title{Note about square function estimates and uniformly rectifiable measures}
\begin{document}

\begin{abstract}
We generalise and offer a different proof of a recent $L^2$ square function estimate on UR sets by
Hofmann, Mitrea, Mitrea and Morris. The proof is a short argument using the $\alpha$-numbers of Tolsa.
\end{abstract}

\maketitle

\section{Introduction}

We will deal with certain square function estimates involving the following class of kernels:
\begin{defn}
Let $\gamma_1, \gamma_2 > 0$.
We say that $S \in K_{\gamma_1, \gamma_2}(\R^d)$ if $S\colon \R^{d} \times \R^{d} \setminus \{(x,x)\colon\, x \in \R^{d}\} \to \R$ satisfies
for some $C < \infty$ that
\begin{displaymath}
|S(x,y)| \le \frac{C}{|x-y|^{\gamma_1}}
\end{displaymath}
and
\begin{displaymath}
|S(x,y) - S(x,y')| \le C\frac{|y-y'|^{\gamma_2}}{|x-y|^{\gamma_1+\gamma_2}}
\end{displaymath}
whenever $|y-y'| \le |x-y|/2$. For $\gamma > 0$ we set $K_{\gamma}(\R^d) = K_{\gamma, 1}(\R^d)$.
\end{defn}
Let $0 < n < d$ and $\mu$ be an $n$-ADR measure in $\R^d$.
We denote the support of the measure $\mu$ by $E$, i.e. $E = \textup{spt}\,\mu$. For convenience only we assume that
$d(E) = \infty$. The fact that $\mu$ is $n$-Ahlfors-David-regular (denote it by $n$-ADR) means that $\mu(B(x,r)) \sim r^n$ for every $x \in E$. Later on we will be concerned with the uniformly rectifiable measures:
\begin{defn}\label{defn:UR}
A measure $\mu$ in $\R^d$ is $n$-UR (uniformly rectifiable) if it is $n$-ADR and it satisfies the big pieces of Lipschitz images (BPLI) property.
This means that there should exist $\theta, M > 0$ such that for all $x \in E = \textup{spt}\,\mu$ and $r > 0$
we have a Lipschitz mapping $g$ from the ball $B_n(0,r) \subset \R^n$ to $\R^d$ with Lip$(g) \le M$ and
\begin{displaymath}
\mu(B(x,r) \cap g(B_n(0,r))) \ge \theta r^n.
\end{displaymath}
\end{defn}

Suppose $S \in K_{n+\beta, \gamma}(\R^d)$ for some $\beta,\gamma > 0$. 
For $f \in L^2(\mu) \cup L^{\infty}(\mu)$ and $x \in \R^{d} \setminus E$ we define
\begin{displaymath}
T_{S, \mu} f(x) = \int_E S(x,y)f(y)\,d\mu(y).
\end{displaymath}
In this ADR setting the $T1$ theorem, Theorem 3.2 of \cite{HMMM}, says that the square function estimate
\begin{equation}\label{eq:SF}
\int_{\R^{d} \setminus E} |T_{S,\mu}f(x)|^2 d(x, E)^{2\beta-(d-n)}\, dx \lesssim \int_E |f(y)|^2 \,d\mu(y), \qquad f \in L^2(\mu),
\end{equation}
is equivalent to
\begin{equation}\label{eq:t1}
\sup_{R \in \mathcal{D}(E)} \frac{1}{\mu(R)} \int_{\widehat R}  |T_{S,\mu}1(x)|^2 d(x, E)^{2\beta-(d-n)}\, dx < \infty.
\end{equation}
This condition involves some dyadic notation which we need to eventually explain carefully. However, before doing that we give a brief account on what we are aiming towards.

The $T1$ theorem is extremely useful for verifying the square function estimate \eqref{eq:SF}. However, the object
\begin{displaymath}
T_{S, \mu}1(x) = \int_E S(x,y)\,d\mu(y)
\end{displaymath} 
might not be so easy to get a hold of if $E$ is not something "geometrically simple". In the case that $\mu$ is not only $n$-ADR but also $n$-UR and $S \in K_{n+\beta}(\R^d)\, (= K_{n+\beta, 1}(\R^d))$, a Carleson type condition where one only needs to integrate over $n$-planes turns out to be sufficient for  \eqref{eq:SF}. Such a condition involving only the objects
\begin{displaymath}
T_{S, L}1(x) := \int_L S(x,y)\,d\mathcal{H}^n(y)
\end{displaymath}
can be preferable when $S$ is seen to have some special cancellation on $n$-planes $L$. Here $\mathcal{H}^n$ denotes the $n$-dimensional Hausdorff measure in $\R^d$.
The fact that a certain Carleson type condition involving only $T_{S,L}$ implies \eqref{eq:SF}
is the content of Theorem \ref{thm:main}. The theorem contains some of the results of David and Semmes \cite{DS} and Hofmann, Mitrea, Mitrea and Morris \cite{HMMM} (a much more detailed discussion is given after
stating the theorem). We emphasise the short proof inspired by the recent techniques of Tolsa and co-authors \cite{T1} and \cite{CGLT}.

Let us now introduce some relevant objects and definitions. First, we explain the dyadic notation related to the T1 condition \eqref{eq:t1}.
Let $\mathcal{D}(E)$ be a dyadic structure in $E$ (that is, a collection of David or Christ cubes).
This means that $\mathcal{D}(E) = \bigcup_j \mathcal{D}_j(E)$, and each cube (this is just terminology) $Q \in \mathcal{D}_j(E)$
satisfies $Q \subset E$, $c^{-1}2^{-j} \le \diam(Q) \le 2^{-j}$ and $\mu(Q) \sim 2^{-jn}$. We set $\ell(Q) = 2^{-j}$. These sets enjoy the usual structural properties of dyadic cubes
i.e. for two cubes $Q, R \in \mathcal{D}(E)$ either $Q \cap R = \emptyset$ or one of them is contained in the other. For a dyadic cube $R \in \mathcal D(E) $, $R^{(k)}$ denotes the unique cube $S \in \mathcal D(E)$ such that $R \subset S$ and $\ell(S) = 2^k\ell(R)$.

For a true cube $W \subset \R^{d}$ we denote its side length also by $\ell(W)$.
Let $\mathcal{W}$ denote the collection of maximal cubes $W$ from the standard dyadic grid of $\R^{d}$ for which there holds that $3W \subset \R^{d} \setminus E$.
Then we have that $d(x, E) \sim \ell(W)$ for every $x \in 2W$. To each $W \in \mathcal{W}$ we associate precisely one $Q(W) \in \mathcal{D}(E)$ for which $d(Q(W), W) \sim \ell(W) \sim \ell(Q(W))$.
For every $Q \in \mathcal{D}(E)$ we then define the Whitney region associated to $Q$ by setting
\begin{displaymath}
W_Q = \bigcup \{W \in \mathcal{W}\colon\, Q(W) = Q\}.
\end{displaymath}
The Carleson box $\widehat R$ is defined by
\begin{displaymath}
\widehat R = \mathop{\bigcup_{Q \in \mathcal{D}(E)}}_{Q \subset R} W_Q.
\end{displaymath}

The above is a way to produce Whitney regions which works in this generality. The exact way of producing them is not of great importance. Rather, it is the properties that they enjoy which we shall now list.
We have that the sets $W_Q$, $Q \in \mathcal{D}(E)$, are disjoint, $\R^{d} \setminus E = \bigcup_{W \in \mathcal{W}} W =  \bigcup_{Q \in \mathcal{D}(E)} W_Q$, $d(x,E) \sim \ell(Q)$
if $x \in W_Q$, and $|W_Q| \lesssim \ell(Q)^{d}$ (if $W_Q \neq \emptyset$ then $|W_Q| \sim \ell(Q)^{d}$). For later purposes we now also fix $M \sim 1$ so that $2W \subset B(c_Q, M\ell(Q))$
if $Q(W) = Q$. Here $c_Q$ denotes the centre of $Q$ -- it is a point in $Q$ such that $d(c_Q, E \setminus Q) \gtrsim \ell(Q)$. To each cube $Q \in \mathcal{D}(E)$ we also associate the ball
$B_Q = B(c_Q, 2M\ell(Q))$. 

The big pieces of Lipschitz images property stated in Definition \ref{defn:UR} seems to be the preferred definition of uniform rectifiability. However, it is equivalent to a huge plethora of different conditions.
For us the crucial one is the one using the so called $\alpha$-numbers of Tolsa \cite{T1}. Let us introduce this now. For two Borel measures $\sigma$ and $\nu$ in $\R^{d}$ and a closed ball $B \subset \R^{d}$ we set
\begin{displaymath}
d_B(\sigma, \nu) = \sup\Big\{ \Big| \int f \,d\sigma - \int f\,d\nu\Big|\colon \, \textup{Lip}(f) \le 1,\, \textup{spt}\,f\subset B\Big\}.
\end{displaymath}
For $Q \in \mathcal{D}(E)$, we define
\begin{displaymath}
\alpha(Q) = \frac{1}{\ell(Q)^{n+1}} \inf_{c \ge 0, L} d_{B_Q}(\mu, c\mathcal{H}^n_{\mid L}),
\end{displaymath}
where (recall that) $B_Q = B(c_Q, 2M\ell(Q))$, and the infimum is taken over all the constants $c \ge 0$ and all the $n$-planes $L$ for which $L \cap \frac{1}{2}B_Q \ne \emptyset$.
The constant $\alpha(Q)$ measures in a scale invariant way how close $\mu$ is to a flat $n$-dimensional measure in the ball $B_Q$.
The key result of \cite{T1} for us is that if $\mu$ is $n$-UR then for all $R \in \mathcal{D}(E)$ we have that
\begin{equation}\label{eq:acar}
\mathop{\sum_{Q \in \mathcal{D}(E)}}_{Q \subset R} \alpha(Q)^2 \mu(Q) \lesssim \mu(R).
\end{equation}
We also choose $c_Q$ and $L_Q$ which minimise $\alpha(Q)$. We always have that $c_Q \lesssim 1$, and that if $\alpha(Q)$ is small enough, then also $c_Q \gtrsim 1$ (see \cite{T1}).

We are now ready to formulate our theorem about square function estimates for UR measures. We discuss the context still a bit more after stating the theorem.
\begin{thm}\label{thm:main}
Let $n, d$ be integers and $0 < n < d$. Suppose $S$ is a kernel which satisfies $S \in K_{n+\beta}(\R^d)$ for some $\beta > 0$.
Let $\mu$ be an $n$-UR measure in $\R^{d}$ with $E = \textup{spt}\,\mu$. If the Carleson condition
\begin{equation}\label{eq:modcar}
\sup_{R \in \mathcal{D}(E)} \frac{1}{\ell(R)^n} \int_{\widehat R}\,  \sup_{L\colon d(x,L) \sim d(x,E)} |T_{S,L}1(x)|^2 d(x, E)^{2\beta-(d-n)}\, dx < \infty
\end{equation}
holds, then the square function estimate \eqref{eq:SF} holds. Here, for a given $x$, the supremum is taken over all the $n$-planes $L \subset \R^d$ for which $d(x,L) \sim d(x,E)$.
\end{thm}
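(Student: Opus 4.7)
The plan is to apply the $T1$ theorem of \cite{HMMM} (Theorem~3.2 there) to reduce \eqref{eq:SF} to the Carleson condition \eqref{eq:t1} for $T_{S,\mu}1$, and then verify \eqref{eq:t1} by comparing $T_{S,\mu}1$ to plane integrals $c_Q T_{S,L_Q}1$ via Tolsa's $\alpha$-numbers and the Carleson packing \eqref{eq:acar}. Fix $R\in\mathcal D(E)$, decompose $\widehat R=\bigsqcup_{Q\subset R}W_Q$, and pick a small threshold $\alpha_0>0$. Call $Q$ \emph{bad} if $\alpha(Q)>\alpha_0$ and \emph{good} otherwise. For bad cubes the pointwise bound $|T_{S,\mu}1(x)|\lesssim d(x,E)^{-\beta}$ (from the size of $S$ plus ADR) yields $\int_{W_Q}|T_{S,\mu}1|^2 d(\cdot,E)^{2\beta-(d-n)}\,dx\lesssim \mu(Q)\le \alpha_0^{-2}\alpha(Q)^2\mu(Q)$, which is summable against \eqref{eq:acar}.

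For good cubes $Q$ and $x\in W_Q$, I would split $T_{S,\mu}1(x)=c_Q T_{S,L_Q}1(x)+E_Q(x)$. Goodness gives $c_Q\sim 1$ and forces $L_Q\cap B_Q$ to lie within Hausdorff distance $o(\ell(Q))$ of $E\cap B_Q$ (a Lip-$1$ bump around any would-be separating point witnesses this), so that $d(x,L_Q)\sim d(x,E)$ for every $x\in W_Q$. Therefore $|c_Q T_{S,L_Q}1(x)|\lesssim \sup_{L\colon d(x,L)\sim d(x,E)}|T_{S,L}1(x)|$, and the sum of these contributions over good cubes in $R$ is controlled by \eqref{eq:modcar} by $\lesssim \ell(R)^n\sim \mu(R)$.

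The heart of the argument is the estimate for $E_Q$. Pick a partition of unity $\{\phi_k\}_{k\ge 0}$ adapted to the annular scales $2^k\ell(Q)$ around $c_Q$ (truncated to a neighbourhood of $E\cup L_Q$ to avoid the singularity of $S(x,\cdot)$ inside $W_Q$; this suffices since $\mu$ and $c_Q\mathcal H^n_{|L_Q}$ are supported there). Then $E_Q(x)=\sum_k\int\phi_k S(x,\cdot)\,d(\mu-c_Q\mathcal H^n_{|L_Q})$, and on $\supp\phi_k$ one has $|x-y|\sim 2^k\ell(Q)$, giving $\operatorname{Lip}(\phi_k S(x,\cdot))\lesssim(2^k\ell(Q))^{-(n+\beta+1)}$. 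Inserting the intermediate measure $c_{Q^{(k)}}\mathcal H^n_{|L_{Q^{(k)}}}$ and invoking $d_{B_{Q^{(k)}}}(\mu,c_{Q^{(k)}}\mathcal H^n_{|L_{Q^{(k)}}})\le\alpha(Q^{(k)})\ell(Q^{(k)})^{n+1}$ controls the near piece by $\alpha(Q^{(k)})(2^k\ell(Q))^{-\beta}$. The remaining plane-to-plane piece is handled by telescoping through scales $j=1,\dots,k$, using the standard Tolsa estimates $|c_{Q^{(j)}}-c_{Q^{(j-1)}}|+\operatorname{angle}(L_{Q^{(j)}},L_{Q^{(j-1)}})\lesssim\alpha(Q^{(j-1)})+\alpha(Q^{(j)})$ from \cite{T1}, contributing $\lesssim(2^k\ell(Q))^{-\beta}\sum_{j\le k}\alpha(Q^{(j)})$. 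Summing over $k$ and reindexing,
\begin{displaymath}
|E_Q(x)|\lesssim\ell(Q)^{-\beta}\sum_{k\ge 0}2^{-k\beta}\alpha(Q^{(k)}),
\end{displaymath}
and Cauchy--Schwarz then gives $\int_{W_Q}|E_Q|^2 d(\cdot,E)^{2\beta-(d-n)}\,dx\lesssim \mu(Q)\sum_k 2^{-k\beta}\alpha(Q^{(k)})^2$.

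To finish, swap the order of summation and for each fixed $k$ split the inner sum $\sum_{Q\subset R}\mu(Q)\alpha(Q^{(k)})^2$: the contribution from $Q$ with $Q^{(k)}\subset R$ collapses levels to $\sum_{Q'\subset R}\alpha(Q')^2\mu(Q')\lesssim\mu(R)$ by \eqref{eq:acar}, while the contribution from $Q$ with $Q^{(k)}\supsetneq R$ runs over only the $k$ proper ancestors of $R$, each with total $\mu$-mass $\le\mu(R)$ and $\alpha\lesssim 1$; so the inner sum is $\lesssim(1+k)\mu(R)$, and $\sum_k(1+k)2^{-k\beta}$ converges. The main obstacle in realising this plan is the telescoping control on successive approximating planes across distinct balls, which ultimately reduces to standard plane-comparison lemmas for $\alpha$-numbers found in \cite{T1}; everything else is routine given the $T1$ theorem and the setup.
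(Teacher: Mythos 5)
Your proposal is correct and follows essentially the same route as the paper's proof: reduce to the $T1$ condition \eqref{eq:t1} via Theorem 3.2 of \cite{HMMM}, split the cubes according to whether $\alpha(Q)$ is small, on good Whitney regions show $d(x,L_Q)\sim d(x,E)$ by the Lipschitz-bump argument and compare $T_{S,\mu}1$ with $c_Q T_{S,L_Q}1$ through an annular decomposition plus telescoping of Tolsa's approximating planes (Lemma 3.4 of \cite{T1}), and then sum using the packing condition \eqref{eq:acar} together with the hypothesis \eqref{eq:modcar}. The remaining differences are cosmetic (the paper centres the annuli at $x$ and shifts ancestors to $Q^{(k+s_0)}$ so that $\supp \gamma_k \subset B_{Q^{(k+s_0)}}$, and it folds the large-$\alpha$ cubes into the same ancestor sum rather than treating them separately).
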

The theorem is stated in all co-dimensions, but it is at least interesting in
the case of co-dimension 1 (meaning that $d = n + 1$).
Hofmann, Mitrea, Mitrea and Morris \cite{HMMM} proved that the square function estimate \eqref{eq:SF} holds,
if $\mu = \mathcal{H}^n_{\mid E}$ for a given $n$-UR set $E \subset \R^{n+1}$ and
$S(x,y) = (\partial_j K)(x-y)$ for some fixed $j \in \{1,\ldots, n+1\}$, where
$K \in C^2(\R^{n+1} \setminus \{0\})$, $K$ is odd and $K(\lambda x) = \lambda^{-n} K(x)$ for $\lambda > 0$, $x \in \R^{n+1} \setminus \{0\}$.
Such kernels $S$
are kernels of convolution form in $K_{n+1}(\R^{n+1})$ and in fact satisfy $T_{S,L}1(x) = 0$ for every $n$-plane $L$ and $x \not \in L$.
Much earlier, David and Semmes \cite{DS} had proved the case, where $K(x) = x_i/|x|^{n+1}$ for some $i \in \{1,\ldots, n+1\}$. 

The proof of Theorem \ref{thm:main} is completely different than the proof of the above referenced result in \cite{HMMM} (Corollary 5.7 of \cite{HMMM}).
The proof there follows the following steps:
\begin{enumerate}
\item Using a general local $Tb$ theorem one proves that having big pieces of square function estimates (BPSFE) is enough
for \eqref{eq:SF}. See Definition 4.1 of \cite{HMMM} for BPSFE.
\item One proves \eqref{eq:SF} in the case that $S(x,y) = (\partial_j K)(x-y)$ like above and $E$ is a Lipschitz graph. This uses, among other things, Fourier
analysis and borrows some techniques from the earlier papers \cite{Ch}, \cite{H1}, \cite{HL} and \cite{Jo1}.
\item One then considers a set $E$ which has big pieces of Lipschitz graphs (BPLG), or rather (BP)$^{k}$LG for some $k$ (big pieces of big pieces of...). Then \eqref{eq:SF}
follows (for $S$ like above) from the Lipschitz graph case using the theorem about the sufficiency of BPSFE.
\item Finally, one uses a deep geometric fact by Azzam and Schul \cite{AS} which says that a UR set $E$ has (BP)$^{2}$LG.
\end{enumerate}
We work directly with the given UR measure $\mu$ making no reductions. We use the technology of $\alpha$-numbers (inspired by \cite{CGLT}) not having to
resort to Fourier analysis. In particular, we don't have to restrict to convolution form kernels. We note that \cite{HMMM} does also include some
results about variable coefficient kernels for which they need their convolution form theorem combined with some
spherical harmonics extensions.

\begin{rem}
Notice that if $S \in K_{n+\beta}$, $\beta > 0$, has the cancellation property $T_{S,L}1 \equiv 0$ for every $n$-plane, then \eqref{eq:SF} holds
for every $n$-UR measure $\mu$. Our Carleson condition \eqref{eq:modcar} is a formal relaxation of this. The absolutely most naive relaxation would be to assume that
\begin{displaymath}
\int_{\R^{d} \setminus L} |T_{S, L}f(x)|^2 d(x, L)^{2\beta-(d-n)}\, dx \le C \int_L |f(y)|^2 \,d\mathcal{H}^n(y), \qquad f \in L^2(L),
\end{displaymath}
for every $n$-plane $L$ and some constant $C < \infty$ independent of $L$. But such a property does not imply much: the square function estimate
can then fail on a sphere even for a positive kernel $S$. For example, define
\begin{displaymath}
S(x,y) := \frac{1_{B_{1}}(x)}{H(x) + |x-y|^2}, \qquad x, y \in \R^2,\, x \ne y.
\end{displaymath}
Here $B_{1} := B(0,1)$ and $H\colon \R^2 \to [0,\infty]$, $H(x) := h(d(x,\partial B_{1}))$, where 
\begin{displaymath}
h(t) := t^2 \left(1 + \log^{+} \frac{1}{t} \right),
\end{displaymath}
and $\log^{+} t = \max\{\log t,0\}$ for $t > 0$. This was just a minor, perhaps obvious, side note and we omit all the details.
\end{rem}

\subsection*{Acknowledgements}
We thank  J. Azzam and T. Orponen for discussions related to square functions and  UR measures. T. Orponen constructed and verified the details of the omitted
example from the previous remark.

\section{Proof of the square function estimate}
In this section we give a proof of the square function estimate \eqref{eq:SF} under the UR hypothesis and \eqref{eq:modcar}.
The proof is short so we are quite generous with the details. 
\begin{proof}[Proof of Theorem \ref{thm:main}]
We will verify the $T1$ condition  \eqref{eq:t1}. To this end, fix $R \in \mathcal{D}(E)$. Recalling the definition of $\widehat R$ we need to prove that
\begin{displaymath}
\Car(R) := \mathop{\sum_{Q\in\mathcal{D}(E)}}_{Q \subset R} \ell(Q)^{2\beta-(d-n)} \int_{W_Q} |T_{S,\mu}1(x)|^2\, dx \lesssim \mu(R).
\end{displaymath}

For every $Q \in \mathcal{D}(E)$ and $x \in W_Q$ we want to prove that
\begin{align}\label{eq:main}
|T_{S,\mu}1(x)| & \lesssim \frac{1}{\ell(Q)^{\beta}} \mathop{\sum_{P \in \mathcal{D}(E) }}_{P \supset Q} \Big( \frac{\ell(Q)}{\ell(P)} \Big)^{\beta} \alpha(P)
+  \sup_{L\colon d(x,L) \sim d(x,E)} |T_{S,L}1(x)| \\
&=: U_1(Q) + U_2(x). \notag
\end{align}
Indeed, notice that
\begin{align*}
\mathop{\sum_{Q\in\mathcal{D}(E)}}_{Q \subset R} \ell(Q)^{2\beta-(d-n)} \int_{W_Q} U_1(Q)^2\, dx &\lesssim
\sum_{Q:\,Q \subset R} \mu(Q) \sum_{P:\, Q \subset P \subset R}  \Big( \frac{\ell(Q)}{\ell(P)} \Big)^{\beta} \alpha(P)^2 \\
&+ \sum_{Q:\,Q \subset R} \mu(Q) \sum_{P:\, R \subset P}  \Big( \frac{\ell(Q)}{\ell(P)} \Big)^{\beta} \alpha(P)^2 = I_1 + I_2.
\end{align*}
Using the Carleson property of the $\alpha$-numbers \eqref{eq:acar} we see that
\begin{displaymath}
I_1 = \sum_{P:\,P \subset R} \alpha(P)^2 \sum_{Q:\,Q \subset P} \Big( \frac{\ell(Q)}{\ell(P)} \Big)^{\beta} \mu(Q) \lesssim \sum_{P:\,P \subset R} \alpha(P)^2 \mu(P) \lesssim \mu(R).
\end{displaymath}
The term $I_2$ is completely elementary (we just estimate $\alpha(P) \lesssim 1$ and do not need the Carleson property of the $\alpha$-numbers):
\begin{displaymath}
I_2 \lesssim \sum_{Q:\,Q \subset R} \mu(Q) \Big( \frac{\ell(Q)}{\ell(R)} \Big)^{\beta} \sum_{P:\,R \subset P} \Big( \frac{\ell(R)}{\ell(P)} \Big)^{\beta} \lesssim \sum_{Q:\,Q \subset R} \mu(Q) \Big( \frac{\ell(Q)}{\ell(R)} \Big)^{\beta} \lesssim \mu(R).
\end{displaymath}

Next, notice that using \eqref{eq:modcar} we have that
\begin{align*}
\mathop{\sum_{Q\in\mathcal{D}(E)}}_{Q \subset R}& \ell(Q)^{2\beta-(d-n)} \int_{W_Q} U_2(x)^2\, dx \\
&\sim \int_{\widehat R}\,  \sup_{L\colon d(x,L) \sim d(x,E)} |T_{S,L}1(x)|^2 d(x, E)^{2\beta-(d-n)}\, dx  \lesssim \ell(R)^n \sim \mu(R).
\end{align*}
Combining the estimates, we have that \eqref{eq:main} implies that $\Car(R) \lesssim \mu(R)$. So it only remains to prove  \eqref{eq:main}.

Fix $Q \in \mathcal{D}(E)$ and $x \in W_Q$.
If $\alpha(Q) \ge c_0$ we have the trivial estimate
\begin{displaymath}
|T_{S,\mu}1(x)| \lesssim d(x, E)^{-\beta} \sim \ell(Q)^{-\beta} \lesssim \ell(Q)^{-\beta}\alpha(Q) \leq \frac{1}{\ell(Q)^{\beta}} \sum_{P:\,P \supseteq Q} \Big( \frac{\ell(Q)}{\ell(P)} \Big)^{\beta} \alpha(P).
\end{displaymath}
Therefore, we may assume that $\alpha(Q) < c_0$ for a small parameter $c_0$ to be chosen. We will use this to show that $B(x, c_1\ell(Q)) \cap L_Q = \emptyset$ (here $c_1>0$ is a small enough dimensional constant). To this end, let us first show that
\begin{equation}\label{eq:bilat}
\sup_{y\in L_Q\cap B(c_{Q},M\ell(Q))}\frac{d(y,E)}{M\ell(Q)} \le C c_0^{1/(n+1)} = \epsilon_0.
\end{equation}
Suppose $y\in B(c_{Q},M\ell(Q))\cap L_{Q}$ and set $\tau=\tau_y= d(y,E)/M \ell(Q)$. Notice here that $\tau \in [0,1]$ and $B(y,\tau M\ell(Q))\subset \R^{d} \setminus E$. Let $\phi$ satisfy
$1_{B(y,\tau M\ell(Q)/2)}\leq \phi\leq 1_{B(y,\tau M\ell(Q))}$ and Lip$(\phi) \sim (\tau \ell(Q))^{-1}$. Note that spt$\,\phi \subset B(c_{Q},2M\ell(Q)) = B_Q$ and spt$\,\phi \subset \R^{d} \setminus E$.
Therefore, we have that
\begin{align*}
\tau^{-1}c_0 \ell(Q)^n \gtrsim \textup{Lip}(\phi) \alpha(Q) \ell(Q)^{n+1} \ge c_Q \int \phi\, d\mathcal{H}^n_{\mid L_Q} \gtrsim_M \tau^n \ell(Q)^n.
\end{align*}
Here we used that $c_Q \gtrsim 1$ since $\alpha(Q)$ is small.
This establishes \eqref{eq:bilat}.

Suppose then that $B(x, c_1\ell(Q)) \cap L_Q \ne \emptyset$. Then there exists $W \in \mathcal{W}$ so that $Q(W) = Q$ and there exists $y \in 2W \cap L_Q \subset B(c_Q, M\ell(Q)) \cap L_Q$ (the constant $c_1$ is so small that  $B (x, c_1 \ell(Q)) \subset 2W$ if $x \in W$).
But, in view of \eqref{eq:bilat}, this means that
\begin{displaymath}
\ell(W) \sim d(y,E) \leq \epsilon_0 M \ell(Q) \sim \epsilon_0\ell(W),
\end{displaymath}
which is a contradiction for a small enough $\epsilon_0$ i.e. for a small enough $c_0$. We thus conclude that $B(x, c_1\ell(Q)) \cap L_Q = \emptyset$.

Recalling that $c_Q \lesssim 1$ we estimate
\begin{displaymath}
|T_{S,\mu}1(x)| \lesssim \Big| \int S(x,y)\,d(\mu - c_Q \mathcal{H}^n_{\mid L_Q})(y)\Big | + \Big| \int S(x,y)\,d\mathcal{H}^n_{\mid L_Q}(y)\Big | = P_1 + P_2.
\end{displaymath}
We first deal with $P_1$. To this end, notice that
\begin{displaymath}
P_1 \le \sum_{k \ge 1} \Big| \int \gamma_k(y) S(x,y)\,d(\mu - c_Q \mathcal{H}^n_{\mid L_Q})(y)\Big |,
\end{displaymath}
where $\sum_{k \ge 0} \gamma_k = 1$, $\gamma_k$ is smooth, supported on those $y$ for which $|x-y| \sim 2^k \ell(Q)$ and satisfies $\|\nabla \gamma_k \|_{\infty} \lesssim (2^k\ell(Q))^{-1}$.
The key thing is that the corresponding function $\gamma_0$ is not needed, since it is supported on $B(x, c_1\ell(Q))$ and this does not intersect the support of the measure i.e. $E \cup L_Q$.
We further estimate
\begin{align*}
P_1 \le \sum_{k \ge 1} \Big|& \int \gamma_k(y) S(x,y)\,d(\mu - c_{Q^{(k+s_0)}} \mathcal{H}^n_{\mid L_{Q^{(k+s_0)}}})(y)\Big | \\
&+ \sum_{k \ge 1} \Big| \int \gamma_k(y) S(x,y)\,d(c_{Q^{(k+s_0)}} \mathcal{H}^n_{\mid L_{Q^{(k+s_0)}}} - c_Q \mathcal{H}^n_{\mid L_Q})(y)\Big | = J_1 + J_2
\end{align*}
for some $s_0 \sim 1$ such that spt $\gamma_k \subset B_{Q^{(k+s_0)}}$.

The function $y \mapsto \gamma_k(y) S(x,y)$ is Lipschitz with
\begin{displaymath}
\textup{Lip}( \gamma_k(\cdot) S(x,\cdot)) \lesssim (2^k\ell(Q))^{-n-\beta-1}.
\end{displaymath}
This follows easily by using the size, Lipschitz and support properties of the involved functions.
Therefore, we have that
\begin{displaymath}
J_1 \lesssim \sum_{k \ge 1} \alpha(Q^{(k+s_0)}) \ell(Q^{(k+s_0)})^{n+1}(2^k\ell(Q))^{-n-\beta-1} \lesssim \frac{1}{\ell(Q)^{\beta}} \sum_{P:\,P \supset Q} \Big( \frac{\ell(Q)}{\ell(P)} \Big)^{\beta} \alpha(P).
\end{displaymath}

Let us then estimate $J_2$. Let $f_k(y) := (2^k\ell(Q))^{n+1+\beta}\gamma_k(y)S(x,y)$ so that
\begin{displaymath}
J_2 \le \sum_{k \ge 1} (2^k\ell(Q))^{-n-1-\beta} \sum_{j=1}^{k + s_0} \Big| \int f_k(y)\,d(c_{Q^{(j)}} \mathcal{H}^n_{\mid L_{Q^{(j)}}} - c_{Q^{(j-1)}} \mathcal{H}^n_{\mid L_{Q^{(j-1)}}})(y)\Big |.
\end{displaymath}
For a fixed $k$ and $j$ we estimate
\begin{align*}
\Big| \int f_k(y)&\,d(c_{Q^{(j)}} \mathcal{H}^n_{\mid L_{Q^{(j)}}} - c_{Q^{(j-1)}} \mathcal{H}^n_{\mid L_{Q^{(j-1)}}})(y)\Big |\\ 
&\le c_{Q^{(j)}} \Big| \int f_k(y)\,d(\mathcal{H}^n_{\mid L_{Q^{(j)}}} - \mathcal{H}^n_{\mid L_{Q^{(j-1)}}})(y)\Big| \\
&+ |c_{Q^{(j)}} -c_{Q^{(j-1)}}| \Big| \int f_k(y)\, d\mathcal{H}^n_{\mid L_{Q^{(j-1)}}}(y)\Big | = K_1 + K_2.
\end{align*}

To estimate $K_1$ we use that $c_{Q^{(j)}} \lesssim 1$ and that
\begin{displaymath}
d_{H}(L_{Q^{(j-1)}} \cap B_{Q^{(k+s_0)}} , L_{Q^{(j)}}\cap B_{Q^{(k+s_0)}}) \lesssim \alpha(Q^{(j)}) \ell(Q^{(k+s_0)}).
\end{displaymath}
Here $d_H$ stands for the Hausdorff distance and we have used Lemma 3.4 of \cite{T1}. Using this one sees that
\begin{displaymath}
K_1 \lesssim \alpha(Q^{(j)}) (2^k\ell(Q))^{n+1}.
\end{displaymath}
Lemma 3.4 of \cite{T1} also gives that $|c_{Q^{(j)}} -c_{Q^{(j-1)}}| \lesssim \alpha(Q^{(j)})$. After this it is clear that also
\begin{displaymath}
K_2 \lesssim \alpha(Q^{(j)}) (2^k\ell(Q))^{n+1}.
\end{displaymath}

We conclude that
\begin{align*}
J_2 &\lesssim \sum_{k \ge 1} (2^k\ell(Q))^{-\beta} \sum_{j=1}^{k + s_0} \alpha(Q^{(j)}) \\
&\lesssim \sum_{R:\, R \supset Q} \ell(R)^{-\beta} \sum_{P:\, Q \subset P \subset R} \alpha(P) \\
&= \sum_{P:\, P \supset Q}\alpha(P) \ell(P)^{-\beta} \sum_{R:\, R \supset P} \Big(\frac{\ell(P)}{\ell(R)}\Big)^{\beta} \\ &\lesssim \sum_{P:\, P \supset Q}\alpha(P) \ell(P)^{-\beta} = 
 \frac{1}{\ell(Q)^{\beta}} \sum_{P:\,P \supset Q} \Big( \frac{\ell(Q)}{\ell(P)} \Big)^{\beta} \alpha(P).
\end{align*}
Combining the estimates for $J_1$ and $J_2$ we have that
\begin{displaymath}
P_1 \lesssim \frac{1}{\ell(Q)^{\beta}} \sum_{P:\,P \supset Q} \Big( \frac{\ell(Q)}{\ell(P)} \Big)^{\beta} \alpha(P).
\end{displaymath}

For $P_2$ notice that we have that $d(x, L_Q) \sim \ell(Q) \sim d(x,E)$. Therefore, we have that
\begin{displaymath}
P_2 \lesssim \sup_{L\colon d(x,L) \sim d(x,E)} |T_{S,L}1(x)|.
\end{displaymath}
Combining the estimates for $P_1$ and $P_2$ we have that \eqref{eq:main} holds. Therefore, we are done.

\end{proof}

\begin{bibdiv}
\begin{biblist}

\bib{AS}{article}{
   author = {Azzam, Jonas},
   author = {Schul, Raanan},
   title = {Hard Sard: quantitative implicit function and extension theorems for Lipschitz maps},
   journal={Geom. Funct. Anal.},
   volume={22},
   date={2012},
   number={5},
   pages={1062--1123},
}

\bib{Ch}{book}{
   author={Christ, Michael},
   title={Lectures on singular integral operators},
   series={CBMS Regional Conference Series in Mathematics},
   volume={77},
   publisher={Published for the Conference Board of the Mathematical
   Sciences, Washington, DC; by the American Mathematical Society,
   Providence, RI},
   date={1990},
   pages={x+132},
}

\bib{CGLT}{article} {
   author={Chousionis, Vasilis},
   author={Garnett, John},
   author={Le, Triet},
   author={Tolsa, xavier},
   title={Square functions and uniform rectifiability},
   journal={preprint},
   eprint={http://arxiv.org/abs/1401.3382}
} 

\bib{DS}{book}{
   author={David, Guy},
   author={Semmes, Stephen},
   title={Analysis of and on uniformly rectifiable sets},
   series={Mathematical Surveys and Monographs},
   volume={38},
   publisher={American Mathematical Society, Providence, RI},
   date={1993},
   pages={xii+356},
}

\bib{H1}{article}{
   author={Hofmann, Steve},
   title={Parabolic singular integrals of Calder\'on-type, rough operators,
   and caloric layer potentials},
   journal={Duke Math. J.},
   volume={90},
   date={1997},
   number={2},
   pages={209--259},
}

\bib{HL}{article}{
   author={Hofmann, Steve},
   author={Lewis, John L.},
   title={Square functions of Calder\'on type and applications},
   journal={Rev. Mat. Iberoamericana},
   volume={17},
   date={2001},
   number={1},
   pages={1--20},
}

\bib{HMMM}{article}{
   author={Hofmann, Steve},
   author={Mitrea, Dorina},
   author ={Mitrea, Marius},
   author={Morris, Andrew J.},
   title = {$L^p$-square function estimates on spaces of homogeneous type and on uniformly rectifiable sets},
   journal={preprint},
   eprint={http://arxiv.org/abs/1301.4943},
,}

\bib{Jo1}{article}{
   author={Jones, Peter W.},
   title={Square functions, Cauchy integrals, analytic capacity, and
   harmonic measure},
   conference={
      title={Harmonic analysis and partial differential equations (El
      Escorial, 1987)},
   },
   book={
      series={Lecture Notes in Math.},
      volume={1384},
      publisher={Springer, Berlin},
   },
   date={1989},
   pages={24--68},
}

\bib{T1}{article} {
   author={Tolsa, Xavier}
   title={Uniform rectifiability, Calder\'on--Zygmund operators with odd kernel, and quasiorthogonality},
   journal={Proc. London Math. Soc.}
    volume={98},
   date={2009},
   number={2},
   pages={393--426},
}

\end{biblist}
\end{bibdiv}

\end{document}